\documentclass[11pt]{article}
%
%
\usepackage{graphicx}
%
%
%
%
%

\usepackage{amsmath,amssymb}

\usepackage{graphicx}
\usepackage{amssymb}
\usepackage{amsmath}
\newtheorem{lemma}{Lemma}
\newtheorem{myclaim}{Claim}
\newtheorem{definition}{Definition}
\newtheorem{theorem}{Theorem}

\renewcommand{\phi}{\varphi}
\newenvironment{proof}{\noindent{\sf Proof.}}{\hfill $\boxtimes\hspace{2mm}$\linebreak}
\newcommand{\qed}{\hfill $\boxtimes\hspace{1mm}$}
\newenvironment{proof-of-claim}{\noindent{\em Proof of Claim.}}{\hfill $\boxtimes\hspace{2mm}$\linebreak}

\begin{document}

\title{Insert your title here
}


\author{Sanaz Azimipour         \and
        Pavel Naumov 
}




\title{Axiomatic Theory of Betweenness}


\maketitle 

\begin{abstract}
Betweenness as a relation between three individual points has been widely studied in geometry and axiomatized by several authors in different contexts. The article proposes a more general notion of betweenness as a relation between three sets of points. The main technical result is a sound and complete logical system describing universal properties of this relation between sets of vertices of a graph.
\end{abstract}






\section{Introduction}

In this article we develop an axiomatic theory of the betweenness relation. Such a relation could be considered as a relation between points or a relation between sets of points. 

\subsection{Betweenness of Points}

Betweenness of points is a commonly studied relation in geometry. Usually it has been investigated not as a stand-alone notion, but in the context of comprehensive axiomatic theories of geometry. For example, Hilbert's axiomatisation of Euclidean geometry~\cite{h02} treats the relation ``between" as a primitive (non-definable) relation between three points. Three of his ``order" axioms are concerned with this relation:
\begin{enumerate}
    \item If a point $b$ lies between points $a$ and $c$, then $b$ is also between $c$ and $a$, and there exists a line containing the distinct points $a$,$b$, and $c$.
    \item If $a$ and $c$ are two points, then there exists at least one point $b$ on the line $ac$ such that $b$ lies between $a$ and $c$ and at least one point $d$ so situated that $c$ lies between $a$ and $d$.
    \item Of any three points situated on a straight line, there is always one and only one which lies between the other two.
\end{enumerate}
Note that although betweenness is a relation between points and not between lines, these axioms refer to all three primitive terms of Hilbert's axiomatization: betweenness, points, and lines. Furthermore, some properties of betweenness might not be captured by these three axioms at all and, instead, they might follow from the combination of these axioms and other Hilbert's axioms.

\begin{figure}[ht]
\begin{center}
\vspace{0mm}
\scalebox{.6}{\includegraphics{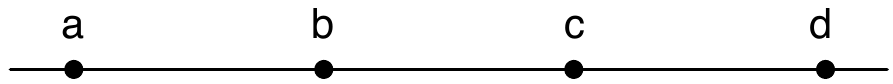}}
\vspace{0mm}
\footnotesize
\caption{If point $b$ is between points $a$ and $c$ and point $c$ is between points $b$ and $d$, then point $b$ is between points $a$ and $d$.}\label{huntington kline axiom figure}
\vspace{0cm}
\end{center}
\vspace{0mm}
\end{figure}

Huntington and Kline~\cite{hk17tams} proposed several systems of axioms for betweenness of points on a line. Their axioms are self-contained in the sense that they do not refer to any other primitive terms. The example of an axiom in one of their systems, see Figure~\ref{huntington kline axiom figure}, is ``If point $b$ is between points $a$ and $c$ and point $c$ is between points $b$ and $d$, then point $b$ is between points $a$ and $d$".

Betweenness as a relation between three points could be generalised from a relation between points on a line to a relation between points on a plane by saying that a point $b$ is between points $a$ and $c$ if point $b$ belongs to the open interval with the end points $a$ and $c$. This could be even further generalised to a relation between points in a metric space through the triangle inequality. Namely, we can say that $b$ is between $a$ and $c$ if $b$ is not equal to either of these two points and $d(a,c)=d(a,b)+d(b,c)$. 

Another way to generalise betweenness is to consider this relation between vertices on a graph. We can say that vertex $b$ is between vertices $a$ and $c$ if $b$ is an internal vertex of each path from vertex $a$ to vertex $c$, see Figure~\ref{every path figure}. This relation has a close connection to the betweenness on road systems recently studied by Bankston~\cite{b13bms}.

\begin{figure}[ht]
\begin{center}
\vspace{0mm}
\scalebox{.6}{\includegraphics{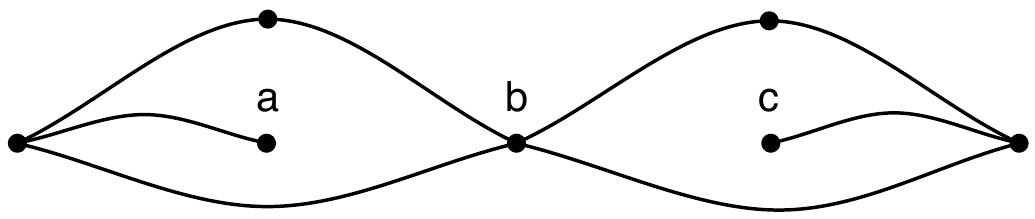}}
\vspace{0mm}
\footnotesize
\caption{Vertex $b$ is between vertices $a$ and $c$ if $b$ is an internal vertex of each path from vertex $a$ to vertex $c$.}\label{every path figure}
\vspace{0cm}
\end{center}
\vspace{0mm}
\end{figure}

Finally, it is also possible to consider betweenness as a relation on partial orders and other similar structures. See Fishburn~\cite{f71jpaa} for a review of the results in this area.

\subsection{Betweenness of Sets}

Betweenness could be also considered as a relation between {\em sets} of points. For any three sets $A,B,C\subseteq\mathbb{R}$, we say that the set $B$ is between sets $A$ and $C$ if for any $a\in A$ and any $c\in C$ there is a point $b\in B$ such that point $b$ is between points $a$ and $c$. We denote this relation between sets $A$, $B$, and $C$ by $A|B|C$. For example, $\mathbb{Q}|\mathbb{Q}|\mathbb{R}\setminus\mathbb{Q}$. In other words, the set of all rational numbers $\mathbb{Q}$ is between itself and the set of all irrational numbers. This statement is true because every open interval contains at least one rational point. There are at least three natural generalisations of this relation.

First, for any sets $A,B,C\subseteq\mathbb{R}^2$ we can say that $A|B|C$ if for any $a\in A$ and any $c\in C$ there is a point $b\in B$ such that point $b$ is an internal point of the interval with end points $a$ and $c$. This notion of set betweenness could be generalised to sets in an arbitrary metric space if the ``point of the interval" requirement is replaced with $d(a,c)=d(a,b)+d(b,c)$.

\begin{figure}[ht]
\begin{center}
\vspace{0mm}
\scalebox{.6}{\includegraphics{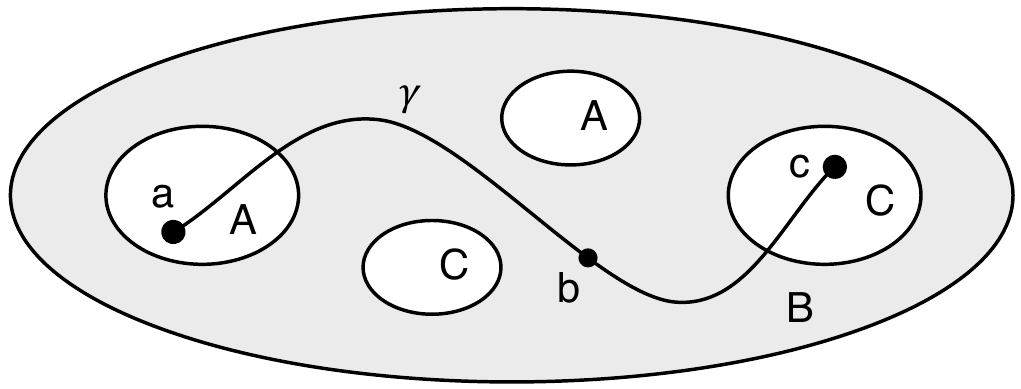}}
\vspace{0mm}
\footnotesize
\caption{$A|B|C$ if for any $a\in A$, any $c\in C$ and any curve $\gamma$ from point $a$ to point $b$, there is $b\in B$ such that $b$ is an internal point of curve $\gamma$.}\label{curve example figure}
\vspace{0cm}
\end{center}
\vspace{0mm}
\end{figure}

Second, for any sets $A,B,C\subseteq\mathbb{R}^2$ we can say that $A|B|C$ if for any $a\in A$, any $c\in C$ and any curve $\gamma$ from point $a$ to point $b$, there is $b\in B$ such that $b$ is an internal point of curve $\gamma$, see Figure~\ref{curve example figure}. This notion of betweenness could be generalised to a relation between sets of points in an arbitrary topological space.

Finally, see Figure~\ref{graph figure}, we can consider set betweenness on graphs. For any sets of vertices $A$, $B$, and $C$, we say that the set $B$ is between sets $A$ and $C$ if for each vertex $a\in A$, each vertex $c\in C$, and each path from vertex $a$ to vertex $c$ there is an internal vertex of this path that belongs to the set $B$. This notion of betweenness, mostly between edges rather than graphs, has been used by the second author to describe information flow properties in communication networks~\cite{mn11apal,mn11amai}.

\begin{figure}[ht]
\begin{center}
\vspace{0mm}
\scalebox{.6}{\includegraphics{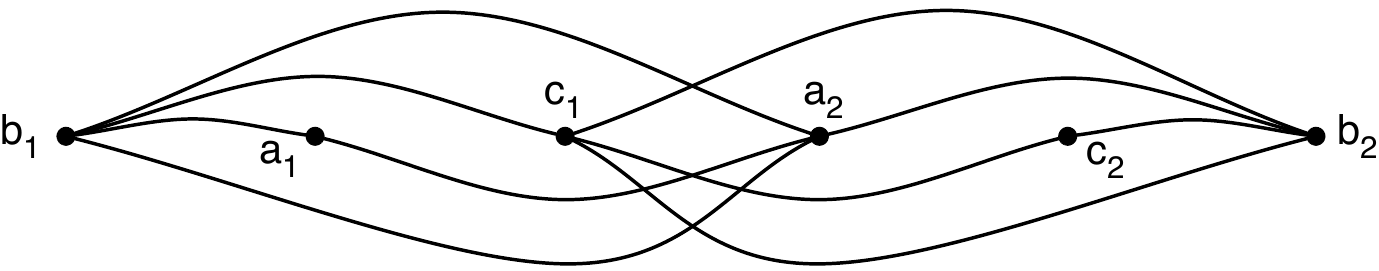}}
\vspace{0mm}
\footnotesize
\caption{$\{a_1,a_2\}|\{b_1,b_2\}|\{c_1,c_2\}$,  because for each vertex $a\in \{a_1,a_2\}$, each vertex $c\in \{c_1,c_2\}$ and each path from vertex $a$ to vertex $c$ there is an internal vertex of this path that belongs to the set $\{b_1,b_2\}$.}\label{graph figure}
\vspace{0cm}
\end{center}
\vspace{0mm}
\end{figure}

\subsection{Insertion Principle}

One of the more interesting observations about betweenness is that if point $b$ is between points $a$ and $c$, and point $i$ is between points $a$ and $b$, then point $i$ is between points $a$ and $c$. We call this statement the ``insertion principle", because it  can be informally rephrased as ``if a point $b$ is between points $a$ and $c$ and  a point $i$ is {\em inserted} between points $a$ and $b$, then point $i$ is also between points $a$ and $c$", see Figure~\ref{insertion linear}. Using our notation for betweenness, this principle can be written as $\{a\}|\{b\}|\{c\}\to (\{a\}|\{i\}|\{b\}\to \{a\}|\{i\}|\{c\})$, or, omitting curly braces, $a|b|c\to (a|i|b\to a|i|c)$.

\begin{figure}[ht]
\begin{center}
\vspace{0mm}
\scalebox{.6}{\includegraphics{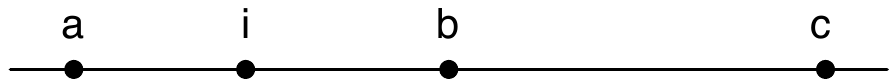}}
\vspace{0mm}
\footnotesize
\caption{Insertion Principle: If point $b$ is between points $a$ and $c$ and a point $i$ is {\em inserted} between points $a$ and $b$, then point $i$ is also between points $a$ and $c$.}\label{insertion linear}
\vspace{0cm}
\end{center}
\vspace{0mm}
\end{figure}

The insertion principle is a very general property of betweenness. For example, see Figure~\ref{insertion curve}, it is true for any sets of points on a plane.

\begin{figure}[ht]
\begin{center}
\vspace{0mm}
\scalebox{.6}{\includegraphics{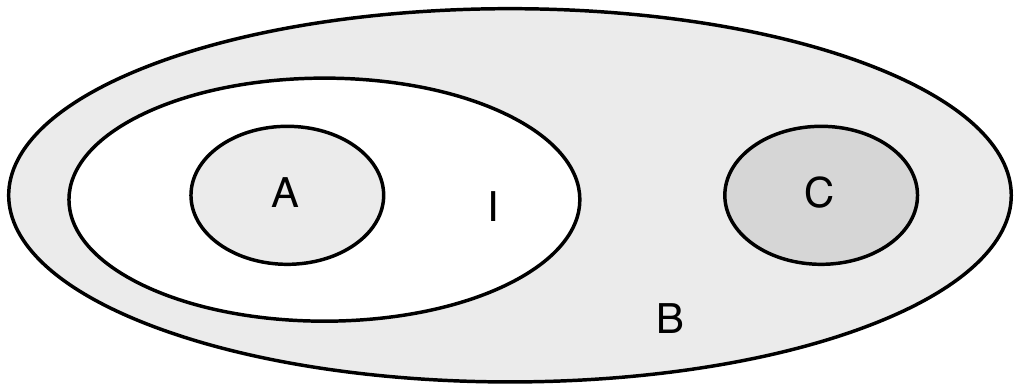}}
\vspace{0mm}
\footnotesize
\caption{$A|B|C\to(A|I|B\to A|I|C)$.}\label{insertion curve}
\vspace{0cm}
\end{center}
\vspace{0mm}
\end{figure}

The betweenness statement $A|B|C$ is equivalent to $C|B|A$ and, thus, there is a symmetry between the first and the third argument of the betweenness predicate. The insertion principle, as stated so far, is not symmetric with respect to these two arguments. As a result, a valid symmetric version of this principle can be stated: $A|B|C\to(B|I|C\to A|I|C)$. The original principle ``inserts" a set $I$ between sets $A$ and $B$, when as the second insertion principle ``inserts" a set $I$ between sets $B$ and $C$. What is more interesting is that there is an even more general form of the insertion principle: 
\begin{equation}\label{symmetric insertion}
    A|B_1,B_2|C\to(A|I|B_1\to(B_2|I|C\to A|I|C)),
\end{equation}
where $B_1,B_2$ denotes the union of sets $B_1$ and $B_2$. This principle is illustrated in Figure~\ref{insertion curve second}. Informally, this principle splits set $B$ into parts $B_1$ and $B_2$ and inserts a set $I$ between $A$ and $B_1$ and between $B_2$ and $C$.

\begin{figure}[ht]
\begin{center}
\vspace{0mm}
\scalebox{.6}{\includegraphics{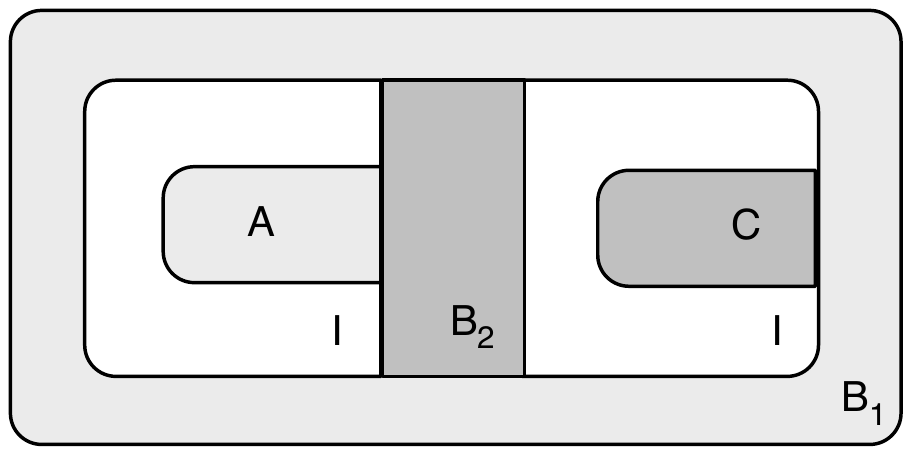}}
\vspace{0mm}
\footnotesize
\caption{$A|B_1,B_2|C\to(A|I|B_1\to(B_2|I|C\to A|I|C))$.}\label{insertion curve second}
\vspace{0cm}
\end{center}
\vspace{0mm}
\end{figure}

It is relatively easy to see why principle~(\ref{symmetric insertion}) is true. Indeed, consider any curve from a point in the set $A$ to a point in the set $C$. By the first assumption, this curve must have an internal point either belongs to set $B_1$ or to set $B_2$. Without loss of generality, assume that the curve contains an internal point from set $B_1$. Therefore, the curve must also contain an internal point from the set $I$ due to the second assumption of the formula~(\ref{symmetric insertion}).

\begin{figure}[ht]
\begin{center}
\vspace{0mm}
\scalebox{.6}{\includegraphics{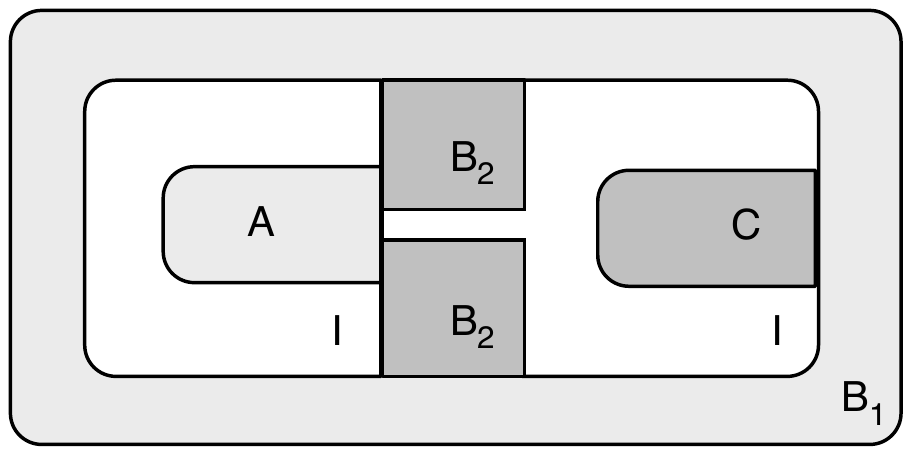}}
\vspace{0mm}
\footnotesize
\caption{$A|B_1,I,B_2|C\to(A|I|B_1\to(B_2|I|C\to A|I|C))$.}\label{insertion curve third}
\vspace{0cm}
\end{center}
\vspace{0mm}
\end{figure}

The above argument could be easily modified to prove an even stronger version of principle~(\ref{symmetric insertion}). Namely,
\begin{equation}\label{symmetric insertion stronger}
    A|B_1,I,B_2|C\to(A|I|B_1\to(B_2|I|C\to A|I|C)).
\end{equation}
This principle is illustrated in Figure~\ref{insertion curve third}. In this article we give a partial answer to the question, what is the strongest form of the insertion principle. It turns out the answer to this question depends on the setting in which the betweenness relation is considered. The main focus of our work is on betweenness as a relation on sets of vertices of a graph. In this setting, principle~(\ref{symmetric insertion stronger}) has an even stronger form:
\begin{equation}\label{symmetric insertion strongest}
    A|B_1,I,B_2|C\to(A|I,C|B_1\to(B_2|A,I|C\to A|I|C)).
\end{equation}
We prove this form of the insertion principle for the finite graph semantics in Lemma~\ref{insertion axiom soundness}. Informally, the main technical result of this article is that (\ref{symmetric insertion strongest}) is the strongest possible form of the insertion principle for graphs. More formally, we prove that the logical system consisting of axiom~(\ref{symmetric insertion strongest}) and several other much more straightforward properties of betweenness is sound and complete with respect to the graph semantics.  

Unlike principle~(\ref{symmetric insertion stronger}), insertion principle~(\ref{symmetric insertion strongest}) is not valid for arbitrary sets of points on a plane. It is valid, however, if sets $A$, $B$, $C$, \dots are arbitrary {\em closed} sets on a plane, or, more generally, arbitrary closed sets in a topological space. Furthermore, since finite graphs can be embedded into $\mathbb{R}^3$, it is likely that our proof of completeness for graphs could be modified to obtain the completeness of our logical system with respect to closed sets in $\mathbb{R}^3$.    

\subsection{Outline}

The article is organised as follows. In the next section we formally define the language of our logical system. In Section~\ref{semantics section}, we introduce graph semantics for this language. In Section~\ref{axioms section}, we list axioms of our formal system. We prove soundness of these axioms in Section~\ref{soundness section} and completeness of our logical system in Section~\ref{completeness section}. Section~\ref{conclusion section} concludes the article by discussing non-strict betweenness on graphs and showing that insertion principle in the form~(\ref{symmetric insertion strongest}) is not, generally speaking, valid for sets of points on a plane.

\section{Syntax}\label{syntax section}

In this section we introduce the syntax of our formal theory of betweenness. Informally, the language of our theory includes betweenness statements of the form $A|B|C$ and all possible Boolean combinations of these statements. This is a propositional theory in the sense that we do not allow the use of quantifiers. Since all Boolean connectives can be expressed through negation and implication, we use only these two in our formal syntax.

\begin{definition}\label{Phi definition}
For any finite set $V$ of ``vertices", let the language $\Phi(V)$ be the minimal set of formulae such that
\begin{enumerate}
    \item $A|B|C\in\Phi(V)$ for all sets $A,B,C\subseteq V$,
    \item $(\phi\to\psi)\in\Phi(V)$ for all $\phi,\psi\in\Phi(V)$,
    \item $\neg\phi\in\Phi(V)$ for each $\phi\in\Phi(V)$.
\end{enumerate}
\end{definition}
For the sake of simplicity, when listing elements of sets $A$, $B$, and $C$ explicitly, we usually omit curly brackets in the expression $A|B|C$. For example, we write $a|b_1,b_2|c$ instead of $\{a\}|\{b_1,b_2\}|\{c\}$.

\section{Semantics}\label{semantics section}

In this article by graph we mean an undirected graph without multiple edges, but possibly with loops. Minor changes are needed to accommodate graphs with multiple edges or to exclude graphs with loops. It is likely that our results can be adopted to directed graphs, but this would require a more substantial revision.

\begin{definition}
A path between a vertex $a$ and a vertex $b$ in a graph $(V,E)$ is any sequence of vertices $a=v_0,v_1,\dots,v_n=b$, where $n\ge 0$, such that  $(v_{i},v_{i+1})\in E$ for each $0\le i< n$. Vertices $v_1,\dots,v_{n-1}$ are called internal vertices of the path.
\end{definition}

Next is the key definition of this article. Its first item formally specifies the betweenness relation as a relation between sets of vertices of a graph. 
\begin{definition}\label{sat-strict}
For any $\phi\in\Phi(V)$ and any graph $(V,E)$, satisfiability relation $(V,E)\vDash \phi$ is defined inductively as follows
\begin{enumerate}
    \item $(V,E)\vDash A|B|C$, if for any $a\in A$, any $c\in C$, and any path between vertices $a$ and $c$, at least one internal vertex of the path belongs to the set $B$.
    \item $(V,E)\vDash\neg\phi$, if $(V,E)\nvDash \phi$,
    \item $(V,E)\vDash\phi\to\psi$, if $(V,E)\nvDash\phi$ or $(V,E)\vDash\psi$.
\end{enumerate}
\end{definition}

Note that item 1 of the above definition requires that at least one {\em internal} vertex of the path belongs to set $B$. If the requirement of the vertex to be internal is removed, then we get the definition of what we could call {\em non-strict} betweenness relation. 

\section{Axioms}\label{axioms section}

For any given finite set $V$, our axiomatic system consists of the following axioms in the language $\Phi(V)$:
\begin{enumerate}
    \item Trivial Path: $\neg(A|B|C)$, if $A\cap C\neq\varnothing$,
    \item Empty Set: $\varnothing|B|C$,
    \item Shortest Path: $A|B|C\to A|(B\!\setminus\!A)|C$,
    \item Aggregation: $A_1|B|C \to(A_2|B|C\to A_1,A_2|B|C)$,
    \item Symmetry: $A|B|C\to C|B|A$,
    \item Left Monotonicity: $A_1,A_2 |B|C\to A_1|B|C$,
    \item Central Monotonicity: $A|B_1|C\to A|B_1,B_2|C$,
    \item Insertion: $A|B_1,I,B_2|C\to(A|I,C|B_1\to(B_2|A,I|C\to A|I|C))$,
    \item Transitivity: $\neg(A|B|d)\to(\neg(d|B|C)\to\neg(A|B|C))$, if $d\notin B$.
\end{enumerate}
The name Shortest Path comes from the shortest path used in the proof of the soundness of this axiom, see Lemma~\ref{shortest path sound}. In the above axioms by $A,B$ we denote the union of sets $A$ and $B$. Note that we represent union by comma only inside betweenness predicate. In all other setting, to avoid confusion, we use the standard notations $A\cup B$. 

We write $\vdash_V\phi$ if formula $\phi$ is provable from the the set of all propositional tautologies and the above axioms using Modus Ponens inference rule. We write $X\vdash_V\phi$, if a formula $\phi$ is derivable with the use of additional axioms from the set $X$. We often omit the subscript $V$ when its value is clear from the context.

\section{Soundness}\label{soundness section}

In this section we prove the soundness of our logical system. We prove soundness of each axiom as a separate lemma for an arbitrary graph $(V,E)$, arbitrary sets $A,B,C,A_1,A_2,B_1,B_2,I\subseteq V$, and an arbitrary vertex $d\in V$. The soundness theorem that follows from these lemmas is stated at the end of this section.


\begin{lemma}
$(V,E)\nvDash A|B|C$, if $A\cap C\neq \varnothing$. 
\end{lemma}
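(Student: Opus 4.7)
The plan is to exhibit a single witnessing path that defeats the universal requirement in item~1 of Definition~\ref{sat-strict}. Since $A\cap C\neq\varnothing$, I would pick any vertex $d\in A\cap C$ and take the trivial path $d=v_0$ of length $n=0$, which is a legitimate path from $d$ to $d$ by the definition of path (which allows $n\ge 0$).

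Next, I would observe that this path has no internal vertices at all: the internal vertices are $v_1,\dots,v_{n-1}$, and with $n=0$ this sequence is empty. Consequently, no internal vertex of the path belongs to $B$, regardless of what $B$ is. Since $d\in A$ and $d\in C$, this furnishes a choice of $a\in A$, $c\in C$, and a path between them that violates the defining condition of $A|B|C$ in item~1 of Definition~\ref{sat-strict}. Therefore $(V,E)\nvDash A|B|C$.

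The main subtlety — really the only content of the argument — is simply that the definition of ``path'' allows $n=0$, so that the trivial single-vertex sequence at $d$ counts as a path from $d$ to itself with empty set of internal vertices. Once this is noted, the conclusion is immediate and no case analysis or further bookkeeping is needed.
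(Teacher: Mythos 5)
Your proof is correct and uses exactly the argument in the paper: take a vertex in $A\cap C$, use the trivial single-vertex path (allowed since paths may have length $n=0$), and note it has no internal vertices, so the condition in Definition~\ref{sat-strict} fails.
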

\begin{proof}
Fix an arbitrary $v\in A\cap C$. Consider trivial path consisting of the single vertex $v$. This path has no internal vertices. Therefore, $(V,E)\nvDash A|B|C$ by Definition~\ref{sat-strict}.
\end{proof}

\begin{lemma}
$(V,E)\vDash \varnothing|B|C$.
\end{lemma}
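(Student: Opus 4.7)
The plan is to observe that this is a vacuous truth argument based on Definition~\ref{sat-strict}. By item~1 of that definition, $(V,E)\vDash \varnothing|B|C$ holds precisely when for every $a \in \varnothing$, every $c \in C$, and every path between $a$ and $c$, some internal vertex of the path lies in $B$. Since there is no $a \in \varnothing$ to begin with, the outer universal quantifier ranges over the empty set, so the condition is vacuously satisfied.

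Concretely, I would write a single short paragraph: fix an arbitrary graph $(V,E)$ and arbitrary sets $B,C \subseteq V$; observe that the condition ``for all $a \in \varnothing$, for all $c \in C$, for all paths from $a$ to $c$, \ldots'' is vacuous because $\varnothing$ has no elements; conclude $(V,E)\vDash \varnothing|B|C$ by appeal to Definition~\ref{sat-strict}.

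There is no real obstacle here. The only thing worth a moment of care is making sure the reader sees that we are invoking the universal-quantifier-over-empty-domain convention implicit in item~1 of Definition~\ref{sat-strict}, rather than some separate base case. No case analysis on paths or on $B$ and $C$ is needed, and no auxiliary lemma is required.
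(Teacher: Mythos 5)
Your proof is correct and matches the paper's argument exactly: both observe that the condition in item~1 of Definition~\ref{sat-strict} is vacuously satisfied because $\varnothing$ has no elements. Nothing further is needed.
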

\begin{proof}
Due to Definition~\ref{sat-strict}, the statement of the lemma is vacuously true because the set $\varnothing$ contains no elements.
\end{proof}

\begin{lemma}\label{shortest path sound}
If $(V,E)\vDash A|B|C$, then $(V,E)\vDash A|(B\!\setminus\!A)|C$.
\end{lemma}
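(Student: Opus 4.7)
The plan is to work directly from Definition~\ref{sat-strict} by showing that any path from a vertex in $A$ to a vertex in $C$ admits an internal vertex lying in $B\setminus A$, not merely in $B$. The natural move is to shorten the path so that it no longer starts anywhere inside $A$, and then locate the guaranteed $B$-vertex inside the shortened sub-path.

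First, I would note that the hypothesis $(V,E)\vDash A|B|C$ forces $A\cap C=\varnothing$: if some $v$ lay in both sets, the trivial single-vertex path at $v$ would witness $(V,E)\nvDash A|B|C$, since it has no internal vertices at all. In particular, for any $c\in C$ we have $c\notin A$.

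Next, fix arbitrary $a\in A$, $c\in C$, and a path $a=v_0,v_1,\dots,v_n=c$ in $(V,E)$. Because $v_0\in A$ while $v_n\notin A$, there is a largest index $i$ with $v_i\in A$, and this $i$ satisfies $i<n$. Consider the sub-path $v_i,v_{i+1},\dots,v_n$: it is itself a path from a vertex in $A$ to a vertex in $C$, so by the hypothesis $(V,E)\vDash A|B|C$ there is some internal vertex $v_j$ of this sub-path, with $i<j<n$, that lies in $B$. By the maximality of $i$ we have $v_j\notin A$, so $v_j\in B\setminus A$. Since $0\le i<j<n$, the vertex $v_j$ is also an internal vertex of the original path. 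Applying Definition~\ref{sat-strict} gives $(V,E)\vDash A|(B\setminus A)|C$.

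The only subtle point, and the one I would check carefully, is the case of a very short path: a length-zero path requires $a=c$, which is ruled out by $A\cap C=\varnothing$, and a length-one path from $a\in A$ to $c\in C$ has no internal vertices, so the hypothesis would already fail and such a path cannot arise. Thus the shortening argument above covers every relevant case, and this is really the whole proof.
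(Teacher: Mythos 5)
Your proof is correct, but it takes a somewhat different route from the paper's. The paper argues by contraposition: assuming $(V,E)\nvDash A|(B\setminus A)|C$, it selects a \emph{shortest} path from $A$ to $C$ avoiding $B\setminus A$ internally, and observes that minimality forces this path to have no internal vertex in $A$ at all, hence no internal vertex in $B$. You instead argue directly: for each given path you truncate at the \emph{last} vertex lying in $A$, apply the hypothesis to the resulting suffix, and note that the $B$-vertex it produces cannot lie in $A$ by maximality of the truncation index. Both arguments rest on the same underlying idea---pass to a segment of the path whose interior avoids $A$---but yours works path-by-path without an extremal choice over the set of all counterexample paths, and it makes explicit the small observation (implicit in the paper) that the hypothesis already forces $A\cap C=\varnothing$. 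Your handling of the degenerate short paths is also correct: the length-zero case is excluded by $A\cap C=\varnothing$, and the length-one case cannot occur under the hypothesis since such an edge would itself violate $(V,E)\vDash A|B|C$.
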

\begin{proof}
Suppose that $(V,E)\nvDash A|(B\!\setminus\!A)|C$. Thus, by Definition~\ref{sat-strict}, there is a path from a vertex in set $A$ to a vertex in set $C$ that has no internal vertices belonging to set $B\setminus A$. Recall that graph $(V,E)$ is finite by Definition~\ref{Phi definition}. Hence, there must exist a {\em shortest} path $\pi$ from a vertex in set $A$ to a vertex in set $C$ that has no internal vertices belonging to set $B\setminus A$. Because path $\pi$ is shortest, it cannot contain an internal vertex from set $A$. Thus, $\pi$ is a path from a vertex in set $A$ to a vertex in set $C$ that has no internal vertices belonging to set $B$. Therefore, $(V,E)\nvDash A|B|C$ by Definition~\ref{sat-strict}.    
\end{proof}

\begin{lemma}
If $(V,E)\vDash A_1|B|C$ and $(V,E)\vDash A_2|B|C$, then $(V,E)\vDash A_1,A_2|B|C$. 
\end{lemma}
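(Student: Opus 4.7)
The plan is to unpack Definition~\ref{sat-strict} directly and do a two-case split on where an arbitrary vertex of the union $A_1 \cup A_2$ lives. Specifically, to show $(V,E) \vDash A_1,A_2 \mid B \mid C$, I would fix arbitrary $a \in A_1 \cup A_2$, $c \in C$, and an arbitrary path $\pi$ from $a$ to $c$, and then argue that $\pi$ has at least one internal vertex in $B$.

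Next I would observe that by definition of union, either $a \in A_1$ or $a \in A_2$. In the first case, the hypothesis $(V,E) \vDash A_1 \mid B \mid C$, applied to $a$, $c$, and $\pi$, immediately produces an internal vertex of $\pi$ belonging to $B$. In the second case, the same conclusion follows by applying the hypothesis $(V,E) \vDash A_2 \mid B \mid C$. Either way, the path $\pi$ satisfies the requirement. Since $a$, $c$, and $\pi$ were arbitrary, Definition~\ref{sat-strict} gives $(V,E) \vDash A_1,A_2 \mid B \mid C$.

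There is no real obstacle here; the lemma is essentially the statement that a universally quantified condition on pairs $(a,c)$ is preserved under taking the union of the sets from which $a$ is drawn. The only thing to be careful about is to apply Definition~\ref{sat-strict} cleanly and to remember the notational convention that the comma in $A_1, A_2$ denotes $A_1 \cup A_2$ inside the betweenness predicate.
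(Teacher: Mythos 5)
Your proposal is correct and matches the paper's proof essentially verbatim: both fix an arbitrary $a\in A_1\cup A_2$, $c\in C$, and path, split on whether $a\in A_1$ or $a\in A_2$ (the paper phrases this as ``without loss of generality''), and apply the corresponding hypothesis via Definition~\ref{sat-strict}. No differences worth noting.
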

\begin{proof}
Consider any $a\in A_1\cup A_2$, any $c\in C$, and any path $a=v_0,\dots,v_n=c$. Without loss of generality, we can assume that $a\in A_1$. Thus, by the assumption $(V,E)\vDash A_1|B|C$ and Definition~\ref{sat-strict}, there must exist $0<i<n$ such that $v_i\in B$. Therefore, $(V,E)\vDash A_1,A_2|B|C$ by Definition~\ref{sat-strict}.
\end{proof}

\begin{lemma}
If $(V,E)\vDash A|B|C$, then $(V,E)\vDash C|B|A$.
\end{lemma}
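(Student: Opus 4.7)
The plan is to reduce the conclusion $(V,E)\vDash C|B|A$ to the hypothesis $(V,E)\vDash A|B|C$ by exploiting the fact that paths in an undirected graph can be reversed without changing their set of internal vertices. So the only real content of the proof is observing that the definition of a path in Section~\ref{semantics section} is symmetric under reversal: if $c=v_0,v_1,\dots,v_n=a$ is a path, then $a=v_n,v_{n-1},\dots,v_0=c$ is also a path, since $(v_{i+1},v_i)\in E$ whenever $(v_i,v_{i+1})\in E$ by the undirectedness of the graph, and the set of internal vertices $\{v_1,\dots,v_{n-1}\}$ is identical for both orderings.

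Concretely, I would start the proof by picking arbitrary $c\in C$, $a\in A$, and an arbitrary path $c=v_0,v_1,\dots,v_n=a$ between them, as required by item~1 of Definition~\ref{sat-strict} for the formula $C|B|A$. Then I reverse the sequence to obtain the path $a=v_n,v_{n-1},\dots,v_0=c$ from a vertex of $A$ to a vertex of $C$. Applying the hypothesis $(V,E)\vDash A|B|C$ to this reversed path via Definition~\ref{sat-strict}, there exists an index $0<i<n$ with $v_i\in B$. Since this $v_i$ is also an internal vertex of the original path $v_0,\dots,v_n$, the requirement for $C|B|A$ is satisfied, and another appeal to Definition~\ref{sat-strict} concludes $(V,E)\vDash C|B|A$.

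There is no real obstacle here; the lemma is essentially a bookkeeping fact about the symmetric definition of paths in undirected graphs. The only thing one needs to be careful about is to invoke the convention stated at the beginning of Section~\ref{semantics section} that graphs are undirected, so that path reversal is legitimate, and to note that reversal preserves the set of internal vertices (only the two endpoints swap roles, but they were not counted as internal to begin with).
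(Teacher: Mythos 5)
Your proof is correct and follows exactly the paper's argument: take an arbitrary path from a vertex of $C$ to a vertex of $A$, reverse it using the undirectedness of the graph, apply the hypothesis to the reversed path, and note that the internal vertices are the same. Nothing to add.
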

\begin{proof}
Consider any $c\in C$, any $a\in A$, and any path $c=v_0,\dots,v_n=a$. Since graph $(V,E)$ is not directed, the sequence $a=v_n,\dots,v_0=c$ is also a path in this graph. Thus, by the assumption $(V,E)\vDash A|B|C$ and Definition~\ref{sat-strict}, there exists $0<i<n$ such that $v_i\in B$. Therefore, $(V,E)\vDash C|B|A$ by Definition~\ref{sat-strict}.
\end{proof}

\begin{lemma}
If $(V,E)\vDash A_1,A_2|B|C$, then $(V,E)\vDash A_1|B|C$.
\end{lemma}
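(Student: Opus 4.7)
The plan is to unwind Definition~\ref{sat-strict} on both sides and observe that the left-hand side of the betweenness predicate is contravariant under subset inclusion. Concretely, I would fix arbitrary witnesses $a\in A_1$, $c\in C$, and an arbitrary path $a=v_0,v_1,\dots,v_n=c$ in $(V,E)$, and then aim to produce an index $0<i<n$ with $v_i\in B$.

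The key observation is that $A_1\subseteq A_1\cup A_2$, so the chosen vertex $a$ also lies in $A_1\cup A_2$. Hence the very same path $v_0,\dots,v_n$ qualifies as a path from a vertex in $A_1\cup A_2$ to a vertex in $C$. Applying the hypothesis $(V,E)\vDash A_1,A_2|B|C$ to this path, Definition~\ref{sat-strict} immediately supplies the required internal vertex in $B$.

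Since $a$, $c$, and the path were arbitrary, a second appeal to Definition~\ref{sat-strict} gives $(V,E)\vDash A_1|B|C$, completing the argument. There is no real obstacle here: the lemma is essentially just the observation that shrinking the source set shrinks the collection of paths one must check, so any universal property over the larger family descends to the smaller one. The proof will be a direct mirror of the earlier Aggregation and Symmetry lemmas, but even shorter because no case split or path reversal is needed.
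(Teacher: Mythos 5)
Your proposal is correct and follows exactly the paper's own argument: fix $a\in A_1$, $c\in C$, and a path, observe $a\in A_1\subseteq A_1\cup A_2$, and apply the hypothesis via Definition~\ref{sat-strict}. No differences worth noting.
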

\begin{proof}
Consider any $a\in A_1$, any $c\in C$, and any path $a=v_0,\dots,v_n=c$. Note that $a\in A_1\subseteq A_1\cup A_2$. Thus, by the assumption $(V,E)\vDash A_1,A_2|B|C$ and Definition~\ref{sat-strict}, there exists $0<i<n$ such that $v_i\in B$. Therefore, $(V,E)\vDash A_1|B|C$ by Definition~\ref{sat-strict}.
\end{proof}

\begin{lemma}
If $(V,E)\vDash A|B_1|C$, then $(V,E)\vDash A|B_1,B_2|C$.
\end{lemma}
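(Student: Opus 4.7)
The plan is to argue directly from Definition~\ref{sat-strict}, following the same template as the Left Monotonicity lemma immediately above. First I would assume $(V,E)\vDash A|B_1|C$ and fix arbitrary $a\in A$, $c\in C$, and an arbitrary path $a=v_0,\dots,v_n=c$. By the hypothesis and item 1 of Definition~\ref{sat-strict}, there exists an index $0<i<n$ with $v_i\in B_1$.

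The only remaining observation is the trivial set inclusion $B_1\subseteq B_1\cup B_2$, which upgrades $v_i\in B_1$ to $v_i\in B_1\cup B_2$. Since this holds for every choice of $a$, $c$, and path, a second appeal to Definition~\ref{sat-strict} yields $(V,E)\vDash A|B_1,B_2|C$, completing the argument.

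There is no real obstacle here: unlike Shortest Path, no minimality/extremal choice of path is needed, and unlike Insertion or Transitivity there is no case analysis on how the path meets the various sets. The proof is a one-line consequence of monotonicity of the existential quantifier ``there is an internal vertex in $X$'' in the parameter $X$. In fact, the entire argument is essentially identical in structure to the Left Monotonicity proof, with the roles of the first and middle arguments of the betweenness predicate interchanged and with the direction of the inclusion reversed (enlarging the witnessing set rather than shrinking the source set).
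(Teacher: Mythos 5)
Your proof is correct and matches the paper's argument exactly: fix an arbitrary path from $A$ to $C$, obtain an internal vertex in $B_1$ from the hypothesis, and upgrade it to $B_1\cup B_2$ by the trivial inclusion. Nothing further is needed.
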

\begin{proof}
Consider any $a\in A$, any $c\in C$, and any path $a=v_0,\dots,v_n=c$.  By the assumption $(V,E)\vDash A|B_1|C$ and Definition~\ref{sat-strict}, there exists $0<i<n$ such that $v_i\in B_1$. Thus, $v_i\in B_1\cup B_2$. Therefore, $(V,E)\vDash A|B_1,B_2|C$ by Definition~\ref{sat-strict}.
\end{proof}

\begin{lemma}\label{insertion axiom soundness}
If $(V,E)\vDash A|B_1,I,B_2|C$,  $(V,E)\vDash A|I,C|B_1$, $(V,E)\vDash B_2|A,I|C$,
then $(V,E)\vDash A|I|C$. 
\end{lemma}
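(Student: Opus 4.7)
The plan is to argue by contradiction: assume $(V,E)\nvDash A|I|C$, so by Definition~\ref{sat-strict} there is some path $\pi: a=v_0,v_1,\dots,v_n=c$ with $a\in A$, $c\in C$, and no internal vertex in $I$. Since $(V,E)$ is finite, I may take $\pi$ to be a \emph{shortest} such path. The goal is then to derive a strictly shorter path from $A$ to $C$ whose internal vertices avoid $I$, contradicting minimality.

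First I would apply the assumption $(V,E)\vDash A|B_1,I,B_2|C$ to $\pi$ to obtain an internal vertex $v_i$ lying in $B_1\cup I\cup B_2$. Since no internal vertex of $\pi$ is in $I$, in fact $v_i\in B_1\cup B_2$. I then split into two symmetric cases.

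In the case $v_i\in B_1$, I look at the subpath $\pi_1=v_0,\dots,v_i$, which runs from $a\in A$ to $v_i\in B_1$. The assumption $(V,E)\vDash A|I,C|B_1$ gives an internal vertex $v_j$ of $\pi_1$ (so $0<j<i<n$) with $v_j\in I\cup C$. Because $v_j$ is also internal to $\pi$ and no internal vertex of $\pi$ lies in $I$, I conclude $v_j\in C$. Then the prefix $v_0,\dots,v_j$ is a path from $A$ to $C$ of length $j<n$, and all of its internal vertices are internal vertices of $\pi$, hence not in $I$ — contradicting the choice of $\pi$ as shortest. The case $v_i\in B_2$ is handled symmetrically, using the suffix $v_i,\dots,v_n$, the assumption $(V,E)\vDash B_2|A,I|C$, and the fact that the resulting internal vertex in $A\cup I$ must lie in $A$, again yielding a strictly shorter bad path.

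The main subtlety is choosing the right notion of minimality so that the contradiction goes through cleanly: one needs shortness with respect to paths from $A$ to $C$ whose internal vertices avoid $I$ (not merely shortest $a$-to-$c$ paths), so that the shorter prefix/suffix witnesses a genuine counterexample of the same form. Once this is set up properly, the rest is a routine case split, and the argument essentially mirrors the informal intuition already used in the introduction for principle~(\ref{symmetric insertion stronger}), sharpened by the extra $C$ and $A$ allowed inside the $B_1$- and $B_2$-clauses of assumptions (ii) and (iii).
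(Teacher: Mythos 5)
Your proof is correct, but it is organized differently from the paper's. Both arguments share the same engine: apply the first hypothesis to get an internal vertex in $B_1\cup B_2$ (it cannot be in $I$), then apply the second or third hypothesis to the corresponding prefix or suffix. The difference lies in how the extra $C$ (resp.\ $A$) allowed in the hypotheses $A|I,C|B_1$ and $B_2|A,I|C$ is neutralized. The paper does this up front, without any minimality assumption: from an arbitrary bad path it extracts the segment $v_k,\dots,v_m$ where $k$ is the \emph{last} index with $v_k\in A$ and $m$ is the \emph{first} subsequent index with $v_m\in C$, so that the segment has no internal vertices in $A\cup I\cup C$ at all; the vertex produced by the second hypothesis then contradicts this directly. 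You instead run a minimal-counterexample argument: if the vertex produced by the second hypothesis lands in $C$ (resp.\ $A$), you truncate to a strictly shorter bad path and contradict minimality. Your version is arguably cleaner to state (no double extremal choice of $k$ and $m$), at the cost of having to be careful — as you correctly note — that minimality is taken over \emph{all} paths from $A$ to $C$ avoiding $I$ internally, not just paths between a fixed pair $a,c$. One small point worth making explicit in a final write-up: the truncated path really is a counterexample of the same form because its internal vertices are among the internal vertices of $\pi$, which you do say; and the length strictly decreases because the index $j$ you find satisfies $0<j<n$. With that, the descent terminates and the contradiction is complete.
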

\begin{proof}
Consider any $a\in A$, any $c\in C$, and any path $a=v_0,\dots,v_n=c$. It suffices to prove that there is $0<i<n$ such that $v_i\in I$. Suppose the opposite, that is, $v_1,\dots,v_{n-1}\notin I$.

Note that $v_0=a\in A$. Let $k$ be the largest integer such that $v_k\in A$ and $0\le k\le n$. Thus, $v_{k+1},v_{k+2},\dots,v_{n}\notin A$. Note now that $v_n=c\in C$. Let $m$ be the smallest integer such that $v_m\in C$ and $k\le m\le n$. Hence, $v_k,\dots,v_{m-1}\notin C$. Therefore, the path $v_k,v_{k+1},\dots,v_{m-1},v_m$ is such that $v_k\in A$, $v_m\in C$, and $v_{k+1},\dots,v_{m-1}\notin A\cup I\cup C$.

By Definition~\ref{sat-strict}, the assumption $(V,E)\vDash A|B_1,I,B_2|C$ implies that there is $k<\ell<m$ such that $v_\ell\in B_1\cup I\cup B_2$. Thus, $v_\ell\in B_1\cup B_2$ because $v_\ell\notin A\cup I\cup C$. Without loss of generality, we can assume that $v_\ell\in B_1$. Hence, the path $v_k,v_{k+1},\dots,v_{\ell-1},v_\ell$ is such that $v_k\in A$ and $v_\ell\in B_1$. Then, by the assumption $(V,E)\vDash A|I,C|B_1$ and due to Definition~\ref{sat-strict}, there must exist $k<i<\ell$ such that $v_i\in I\cup C$. The last statement contradicts the established above fact that $v_{k+1},\dots,v_{m-1}\notin A\cup I\cup C$, because $k<i<\ell<m$.
\end{proof}

\begin{lemma}
If $d\notin B$, $(V,E)\nvDash A|B|d$, $(V,E)\nvDash d|B|C$,
then $(V,E)\nvDash A|B|C$.
\end{lemma}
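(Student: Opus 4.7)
The plan is to unpack the two negated satisfactions into concrete witness paths and then glue them together at $d$, producing a single path whose internal vertices all miss $B$.

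First I would apply Definition~\ref{sat-strict} to the hypothesis $(V,E)\nvDash A|B|d$ to obtain some $a\in A$ and a path $\pi_1 : a=u_0,u_1,\dots,u_k=d$ such that none of the internal vertices $u_1,\dots,u_{k-1}$ belong to $B$. Analogously, from $(V,E)\nvDash d|B|C$ I would extract some $c\in C$ and a path $\pi_2 : d=w_0,w_1,\dots,w_m=c$ whose internal vertices $w_1,\dots,w_{m-1}$ all lie outside $B$.

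Next I would concatenate these into the sequence
\begin{equation*}
  \pi : a=u_0,u_1,\dots,u_{k-1},d,w_1,\dots,w_{m-1},w_m=c.
\end{equation*}
Since consecutive pairs in $\pi$ are consecutive pairs in either $\pi_1$ or $\pi_2$, every such pair is an edge of $(V,E)$, so $\pi$ is a path from $a$ to $c$ in the sense of the definition (which allows repeated vertices). Its internal vertices are exactly $u_1,\dots,u_{k-1},d,w_1,\dots,w_{m-1}$. By the choices of $\pi_1$ and $\pi_2$, none of the $u_i$ or $w_j$ lie in $B$, and the crucial middle vertex $d$ avoids $B$ by the standing hypothesis $d\notin B$. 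Hence no internal vertex of $\pi$ belongs to $B$, so by Definition~\ref{sat-strict} we conclude $(V,E)\nvDash A|B|C$.

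There is no real obstacle here; the only point that requires any care is recognising that the definition of a path in this paper permits walks (repeated vertices allowed), so the concatenation is legal without further pruning, and that the hypothesis $d\notin B$ is precisely what handles the one new internal vertex introduced at the splice.
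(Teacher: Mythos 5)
Your proof is correct and follows essentially the same route as the paper's: extract witness paths for the two negated hypotheses, concatenate them at $d$, and use $d\notin B$ to ensure the spliced internal vertex also avoids $B$. Your remark that the definition of path permits repeated vertices, so no pruning is needed, is a fair observation that the paper leaves implicit.
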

\begin{proof}
By Definition~\ref{sat-strict}, the assumption $(V,E)\nvDash A|B|d$ implies that there is a path $v_0,v_1,\dots,v_k=d$ such that $v_0\in A$ and $v_i\notin B$ for each $0<i<k$. Similarly, the assumption $(V,E)\nvDash d|B|C$, by Definition~\ref{sat-strict}, implies that there is a path $d=u_0,u_1,\dots,u_n$ such that $u_n\in C$ and $u_i\notin B$ for all $0<i<n$. Recall that $d\notin B$ by the assumption of the lemma. Thus, sequence $v_0,v_1,\dots,v_{k-1},d,u_1,u_2,\dots,u_n$ is a path whose internal vertices do not belong to the set $B$. Additionally, $v_0\in A$ and $u_n\in C$. Therefore, $(V,E)\nvDash A|B|C$ by Definition~\ref{sat-strict}.
\end{proof}

The soundness theorem below follows from the above lemmas by induction on the length of derivation.
\begin{theorem}
For each formula $\phi\in\Phi(V)$, if $\vdash_V\phi$,
then $(V,E)\vDash\phi$ for each graph $(V,E)$. \qed
\end{theorem}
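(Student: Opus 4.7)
The plan is to proceed by strong induction on the length of a derivation of $\phi$ in the system $\vdash_V$. A derivation is a finite sequence of formulae, each of which is either a propositional tautology, one of the nine axioms from Section~\ref{axioms section}, or follows from two earlier formulae by Modus Ponens. So it suffices to verify that every axiom is satisfied in an arbitrary graph $(V,E)$ and that Modus Ponens preserves satisfaction in $(V,E)$.

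For the base case, I would split into two subcases. First, if $\phi$ is one of the axioms, then satisfaction in $(V,E)$ is exactly the content of the soundness lemmas proved earlier in this section: the Trivial Path, Empty Set, Shortest Path (Lemma~\ref{shortest path sound}), Aggregation, Symmetry, Left Monotonicity, Central Monotonicity, Insertion (Lemma~\ref{insertion axiom soundness}), and Transitivity lemmas collectively cover all nine axiom schemes for arbitrary choices of the parameters $A,B,C,A_1,A_2,B_1,B_2,I\subseteq V$ and $d\in V$. Second, if $\phi$ is a propositional tautology, then $(V,E)\vDash\phi$ follows because items 2 and 3 of Definition~\ref{sat-strict} interpret $\neg$ and $\to$ exactly by their classical truth tables, so any Boolean assignment of truth values to the atomic subformulae $A|B|C$ of $\phi$ makes $\phi$ true; in particular the assignment induced by $(V,E)\vDash(\cdot)$ does.

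For the inductive step, suppose $\phi$ was obtained by Modus Ponens from $\psi$ and $\psi\to\phi$, both appearing earlier in the derivation. By the induction hypothesis, $(V,E)\vDash\psi$ and $(V,E)\vDash\psi\to\phi$. By item 3 of Definition~\ref{sat-strict}, the latter means that either $(V,E)\nvDash\psi$ or $(V,E)\vDash\phi$; since the first disjunct is ruled out by $(V,E)\vDash\psi$, we conclude $(V,E)\vDash\phi$, as required.

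There is no real obstacle in this proof: all nontrivial work has been front-loaded into the axiom-by-axiom soundness lemmas above. The only things worth being careful about are ensuring that the inductive argument is stated for an arbitrary graph $(V,E)$ uniformly (so that the quantifier over graphs in the theorem statement commutes with the induction on derivation length) and noting explicitly that the ``additional'' clause $X\vdash_V\phi$ of Section~\ref{axioms section} is not at play here since the statement concerns $\vdash_V\phi$ with no extra hypotheses.
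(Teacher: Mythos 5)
Your proof is correct and follows exactly the route the paper intends: the paper's own proof is just the one-line remark that the theorem ``follows from the above lemmas by induction on the length of derivation,'' and your write-up fills in precisely that induction (axiom lemmas plus tautologies for the base case, Modus Ponens for the step). Nothing further is needed.
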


\section{Completeness}\label{completeness section}

The soundness theorem proved in the previous section states that each theorem of our system is valid in each finite graph. 
In this section we prove the converse of this statement, known as the completeness theorem. The proof of the completeness theorem consists in constructing a countermodel for each statement not provable from the axioms.

\begin{theorem}
For any formula $\phi\in \Phi(V)$, if $(V,E)\vDash \phi$ for each graph $(V,E)$, then $\vdash_V \phi$.
\end{theorem}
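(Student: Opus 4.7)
The plan is to establish the contrapositive by a canonical model construction. Assuming $\nvdash_V\phi$, I would extend $\{\neg\phi\}$ to a maximal consistent set $X\subseteq\Phi(V)$ via a Lindenbaum-style argument (the language $\Phi(V)$ is countable since $V$ is finite). On the given vertex set $V$, define the canonical graph $(V,E)$ by declaring $(u,v)\in E$ iff $\neg(u|V\setminus\{u,v\}|v)\in X$. Symmetry makes $E$ symmetric, and Trivial Path places a self-loop at every vertex, both compatible with the paper's conventions. The proof then reduces to a \emph{Truth Lemma} $(V,E)\vDash\psi\Leftrightarrow\psi\in X$ proved by induction on $\psi$. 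The Boolean cases are routine from the maximality of $X$; the only substantive work is the atomic case $\psi=A|B|C$.

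For the forward atomic direction ($A|B|C\in X\Rightarrow(V,E)\vDash A|B|C$), I would use Shortest Path to arrange $B\cap(A\cup C)=\varnothing$. Given any path $a=v_0,\ldots,v_n=c$ in $(V,E)$ with all internal vertices outside $B$, each edge $(v_i,v_{i+1})\in E$ yields $\neg(v_i|V\setminus\{v_i,v_{i+1}\}|v_{i+1})\in X$; since $B\subseteq V\setminus\{v_i,v_{i+1}\}$, the contrapositive of Central Monotonicity strengthens this to $\neg(v_i|B|v_{i+1})\in X$. Iterating Transitivity --- applicable because every internal $v_i$ lies outside $B$ --- produces $\neg(a|B|c)\in X$, and Symmetry with Left Monotonicity then lift this to $\neg(A|B|C)\in X$, contradicting the hypothesis.

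The backward atomic direction ($\neg(A|B|C)\in X\Rightarrow(V,E)\nvDash A|B|C$) is the technical heart of the proof and the place where the Insertion axiom does essential work. Contrapositive Aggregation together with Symmetry reduce to producing, for some $a\in A$ and $c\in C$ with $\neg(a|B|c)\in X$, an actual path in $(V,E)$ from $a$ to $c$ whose internal vertices avoid $B$. The key syntactic tool is a \emph{splitting sub-claim}: if $\neg(a|B|c)\in X$, $a\neq c$, and $(a,c)\notin E$, then some $v\in V\setminus(B\cup\{a,c\})$ satisfies both $\neg(a|B|v)\in X$ and $\neg(v|B|c)\in X$. To prove this sub-claim by contradiction, set $B_1:=\{v\in V\setminus(B\cup\{a,c\}):a|B|v\in X\}$ and $B_2:=V\setminus(B\cup\{a,c\}\cup B_1)$, so under the negation $B_2\subseteq\{v:v|B|c\in X\}$. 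The condition $(a,c)\notin E$ directly gives $a|B_1,B,B_2|c=a|V\setminus\{a,c\}|c\in X$, while the defining properties of $B_1$ and $B_2$ --- combined with Central Monotonicity, Symmetry, and Aggregation --- yield $a|B,c|B_1\in X$ and $B_2|a,B|c\in X$. The Insertion axiom then forces $a|B|c\in X$, the required contradiction. With the sub-claim in hand, the desired path is assembled recursively (trivially when $a=c$ or $(a,c)\in E$, otherwise by splitting at the guaranteed intermediate $v$ and concatenating), with termination guaranteed by the finiteness of $(V,E)$. The main obstacle is precisely this sub-claim, and in particular identifying the partition $B_1,B_2$ that simultaneously satisfies all three hypotheses of Insertion --- exactly the role the novel axiom is engineered to play. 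Once the Truth Lemma is in hand, $\neg\phi\in X$ gives $(V,E)\vDash\neg\phi$, hence $(V,E)\nvDash\phi$, contradicting the assumption that $\phi$ is valid in every graph.
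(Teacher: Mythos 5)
Your overall architecture coincides with the paper's: the same canonical model (your edge relation $(u,v)\in E\Leftrightarrow\neg(u|V\setminus\{u,v\}|v)\in X$ is equivalent, via Central Monotonicity, to the paper's ``no separating set'' definition), the same forward atomic direction via Transitivity, Left Monotonicity and Symmetry, and the same idea of using Insertion to split $\neg(a|B|c)$ at an intermediate vertex. The application of Insertion in your sub-claim is essentially correct (modulo the point that $B_1\cup B\cup B_2=V\setminus\{a,c\}$ presupposes $a,c\notin B$, a case you never reduce to; the paper handles it by first passing to $B\setminus\{a,c\}$ with Central Monotonicity and then to a simple path).

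The genuine gap is termination of your recursive path construction. Your splitting sub-claim produces $v\notin B\cup\{a,c\}$ with $\neg(a|B|v),\neg(v|B|c)\in X$ and you recurse on $(a,v)$ and $(v,c)$ \emph{with the same forbidden set $B$}. Nothing prevents the recursion from revisiting an earlier pair: in the call for $(a,v)$ the new split point $w$ is only constrained to lie outside $B\cup\{a,v\}$, so $w=c$ is possible, and then you are back to the subproblem $(a,c)$. ``Finiteness of $(V,E)$'' does not rescue this, because the same subproblem can occur as a genuine cyclic dependency, not merely a repeated independent one. The paper's device is precisely engineered to avoid this: it defines $G_a=\{g: X\vdash a|B,c|g\}$ and $G_c=\{g: X\vdash g|a,B|c\}$ --- note the middle sets augmented by $c$ and by $a$ respectively --- so that the chosen $g$ satisfies $X\nvdash a|B\cup\{c\}|g$ and $X\nvdash g|B\cup\{a\}|c$. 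The two recursive calls then carry \emph{strictly larger} forbidden sets, and the whole construction becomes a backward induction on $|B|$ (equivalently, induction on $|V\setminus B|$), which terminates in at most $|V|$ steps; a path avoiding $B\cup\{c\}$ internally a fortiori avoids $B$. This strengthening costs nothing in your Insertion instance --- with the augmented definitions the second and third hypotheses $a|B,c|B_1$ and $B_2|a,B|c$ are satisfied even more directly than in your version. Without this modification the backward atomic direction, which you correctly identify as the heart of the proof, is not established.
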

\begin{proof}
Suppose that $\nvdash\phi$. By Lindenbaum's lemma~\cite[Proposition 2.14]{m09}, there is a maximal consistent set of $X\subseteq\Phi(V)$ such that $\neg\phi\in X$. We need to specify a relation $E\subseteq V^2$ such that $(V,E)\nvDash\phi$. This is done in Definition~\ref{E-strict} below.

\begin{definition}\label{G-strict}
$\mathcal{G}(a,c)=\{G\subseteq V \;|\; X\vdash a|G|c \mbox{ and } a,c\notin G \}$.
\end{definition}
Informally $\mathcal{G}(a,c)$ is the family of all sets that ``separate'' vertices $a$ and $c$. We define vertices $a$ and $c$ to be adjacent if they are not ``separated'' by any set.

\begin{definition}\label{E-strict}
$E=\{(a,c)\in V^2\;|\; \mathcal{G}(a,c)=\varnothing\}$.
\end{definition}

The next six lemmas are about arbitrary sets $A,B,C\subseteq V$.

\begin{lemma}\label{edge gateways strict}
$X\nvdash a|B|c$, for each edge $(a,c)\in E$.  
\end{lemma}
\begin{proof}
Suppose that $X\vdash a|B|c$. Thus, $X\vdash a|B\setminus\{a\}|c$ by the Shortest Path axiom. Hence, $X\vdash c|B\setminus\{a\}|a$ by the Symmetry axiom. Then, $X\vdash c|B\setminus\{a,c\}|a$ again by the Shortest Path axiom. Thus, $X\vdash a|B\setminus\{a,c\}|c$ by the Symmetry axiom. Hence, $B\setminus\{a,c\}\in \mathcal{G}(a,c)$ by Definition~\ref{G-strict}. Therefore, $(a,c)\notin E$, by Definition~\ref{E-strict}. 
\end{proof}

\begin{lemma}\label{left right strict}
If $X\vdash A|B|C$, then $(V,E)\vDash A|B|C$.
\end{lemma}
\begin{proof}
Suppose $(V,E)\nvDash A|B|C$. Thus, by Definition~\ref{sat-strict}, there are vertices $a\in A$ and $c\in C$, an integer $n\ge 0$, and a path $v_0,v_1,\dots,v_n$ such that $a=v_0$, $v_n=c$ and $v_1,\dots,v_{n-1}\notin B$.

If $n=0$, then $a=c$. Thus, $A\cap C\neq \varnothing$. Hence, $X\vdash \neg(A|B|C)$ by Trivial Path axiom. Therefore, $X\nvdash A|B|C$ due to the consistency of the set $X$. 

Assume now that $n>0$. Note that $(v_i,v_{i+1})\in E$ for each $0\le i<n$, since $v_0,v_1,\dots,v_n$ is a path. Hence, $X\nvdash v_i|B|v_{i+1}$ for each $0\le i<n$ by Lemma~\ref{edge gateways strict}. Then, $\neg(v_i|B|v_{i+1})\in X$ for each $0\le i<n$ due to the maximality of set $X$. Recall that $v_1,\dots,v_{n-1}\notin B$. Hence, $X\vdash\neg(a|B|c)$ by multiple applications of Transitivity axiom because $n\ge 1$. Thus, $X\vdash\neg(A|B|c)$ by Left Monotonicity axiom. Then, $X\vdash\neg(c|B|A)$ by Symmetry axiom. Hence, $X\vdash\neg(C|B|A)$ by Left Monotonicity axiom. Hence, $X\vdash\neg(A|B|C)$ again by Symmetry axiom. Therefore, $X\nvdash A|B|C$ due to the consistency of the set $X$. 
\end{proof}

\begin{lemma}\label{confusing lemma strict}
If $a,c\notin B$ and $X\nvdash a|B|c$, then there is a path from vertex $a$ to vertex $c$ that does not contain internal vertices from the set $B$.
\end{lemma}
\begin{proof}
Since set $V$ is finite, we prove this statement by backward induction on the size of the set $B\subseteq V$. In other words, we prove the lemma by induction on the size of the set $V\setminus B$. The following three cases cover both the base and the induction steps.

\noindent{\bf Case I:} $a=c$. Consider a single-vertex path that starts and ends at vertex $a$. To finish the proof, we only need to note that this path has no internal vertices.

\noindent{\bf Case II:} $a\neq c$ and the set $\mathcal{G}(a,c)$ is empty. Thus, $(a,c)\in E$ by Definition~\ref{E-strict}. To finish this case, note that the two-vertex path $a,c$ has no internal vertices. 

\noindent{\bf Case III:}  $a\neq c$ and the set $\mathcal{G}(a,c)$ is not empty.  Take any $G\in {\mathcal G}(a,c)$ and define subsets $G_a$ and $G_c$ as follows:
\begin{eqnarray}
G_a &=& \{g\in G \;|\; X\vdash a|B,c|g\},\label{Ga}\\
G_c &=& \{g\in G \;|\; X\vdash g|a,B|c\}.\label{Gc}
\end{eqnarray}

\begin{myclaim}\label{my first claim}
$X\vdash a|B,c|G_a$ and $X\vdash G_c|a,B|c$.
\end{myclaim}
\begin{proof-of-claim}
If the set $G_a$ is empty, then $X\vdash a|B,c|G_a$ follows from the combination of Empty Set axiom and Symmetry axiom. Suppose now that the set $G_a$ is not empty. Note that $X\vdash a|B,c|g$ for each $g\in G_a$ due to definition (\ref{Ga}). Thus, $X\vdash g|B,c|a$ for each $g\in G_a$ by Symmetry axiom. Hence, $X\vdash G_a|B,c|a$ by multiple applications of Aggregation axiom because the set $G_a$ is not empty. Therefore, $X\vdash a|B,c|G_a$ by Symmetry axiom. 

Similarly, if the set $G_c$ is empty, then  $G_c|a,B|c$ is an instance of Empty Set axiom. Suppose now that the set $G_c$ is not empty. Note that $X\vdash g|a,B|c$ for each $g\in G_c$ due to definition (\ref{Gc}). Thus,  $X\vdash G_c|a,B|c$ by multiple applications of Aggregation axiom because the set $G_c$ is not empty.
\end{proof-of-claim}

The following is an instance of Insertion axiom:
$$
a|G_a,B,G_c|c \to(a|B,c|G_a\to(G_c|a,B|c\to a|B|c)).
$$
Hence,
$
X\vdash a|G_a,B,G_c|c\to a|B|c
$ by Claim~\ref{my first claim}. Recall that $X\nvdash a|B|c$ by the assumption of the lemma. Hence, $X\nvdash a|G_a, B,G_c|c$. 
Thus, by Central Monotonicity axiom, $X\nvdash a|G_a, B\cap G,G_c|c$. At the same time, $X\vdash a|G|c$ by the choice of the set $G$ and Definition~\ref{G-strict}. Recall that $G_a,G_c\subseteq G$,  hence, $G_a\cup (B\cap G)\cup G_c\subseteq G$. Then statements $X\vdash a|G|c$ and $X\nvdash a|G_a, B\cap G,G_c|c$ imply that there must exist $g\in G$ such that 

\begin{equation}\label{all about g}
    g\notin G_a,\;\;\; g\notin G_c,\;\;\; g\notin B.
\end{equation}

\begin{myclaim}\label{two a c claims}
$a\notin B\cup\{c\}$ and $c\notin B\cup\{a\}$.
\end{myclaim}
\begin{proof-of-claim}
By the assumption of the lemma, $a,c\notin B$. By the assumption of the case,  $a\neq c$.
\end{proof-of-claim}

\begin{myclaim}\label{two g claims}
$g\notin B\cup\{a\}$ and $g\notin B\cup\{c\}$.
\end{myclaim}
\begin{proof-of-claim}
Recall that $G\in \mathcal{G}(a,c)$ by the choice of the set $G$. Thus, $a, c\notin G$ by Definition~\ref{G-strict}. At the same time $g\in G$ by the choice of vertex $g$. Hence, $g\neq a$ and $g\neq c$. Finally, $g\notin B$ due to statement~(\ref{all about g}). 
\end{proof-of-claim}

\begin{myclaim}\label{a to g claim}
There is a path from vertex $a$ to vertex $g$ whose internal vertices do not belong to the set $B$.
\end{myclaim}
\begin{proof-of-claim}
Note that $g\in G$ by the choice of vertex $g$. At the same time, $g\notin G_a$ by statement~(\ref{all about g}). Thus, $X\nvdash a|B,c|g$ by statement~(\ref{Ga}). Recall that we are proving the lemma by backward induction on the size of set $B$. We now would like apply the induction hypothesis to statement $X\nvdash a|B,c|g$. To do this we must verify the following three conditions:
\begin{enumerate}
    \item $|B|<|B\cup \{c\}|$.
    \item $a\notin B\cup\{c\}$.
    \item $g\notin B\cup\{c\}$.
\end{enumerate}
The first of these conditions holds by the assumption $c\notin B$ of the lemma. The second condition holds by Claim~\ref{two a c claims}. The third condition holds by Claim~\ref{two g claims}. Therefore, by the induction hypothesis,  there is a path from vertex $a$ to vertex $g$ whose internal vertices do not belong to the set $B\cup \{c\}\supset B$. 
\end{proof-of-claim}

\begin{myclaim}\label{g to c claim}
There is a path from vertex $g$ to vertex $c$ whose internal vertices do not belong to the set $B$.
\end{myclaim}
\begin{proof-of-claim}
The proof of this claim is similar to the proof of Claim~\ref{a to g claim} except that it uses statement~(\ref{Gc}) instead of statement~(\ref{Ga}).
\end{proof-of-claim}

To finish the proof of the lemma, consider the paths whose existence stated in Claim~\ref{a to g claim} and Claim~\ref{g to c claim}. These two paths can be combined into a single path $\pi$ from vertex $a$ to vertex $c$ whose internal vertices (possibly with the exception of vertex $g$) do not belong to set $B$. Note that $g\notin B$ by statement~(\ref{all about g}). Therefore, $\pi$ is a path  from vertex $a$ to vertex $c$ whose internal vertices do not belong to set $B$.
\end{proof}

\begin{lemma}\label{about simple paths}
If there is a path from vertex $a$ to vertex $c$ that does not contain internal vertices from the set $B\setminus\{a,c\}$, then there must exist a path from vertex $a$ to vertex $c$ that does not contain internal vertices from the set $B$. 
\end{lemma}
\begin{proof}
If there is a path from vertex $a$ to vertex $c$ that does not contain internal vertices from the set $B\setminus\{a,c\}$, then there must exist a simple (without self-intersections) path $\pi$ with the same property. Any simple path from vertex $a$ to vertex $c$ does not contain vertices $a$ and $c$ as internal vertices. Therefore, path $\pi$  does not contain internal vertices from the set $B$.
\end{proof}

\begin{lemma}\label{post-confusing lemma}
If  $X\nvdash a|B|c$, then
there is a path from vertex $a$ to vertex $c$ that does not contain internal vertices from the set $B$. 
\end{lemma}
\begin{proof}
Suppose that $X\nvdash a|B|c$. Thus $X\nvdash a|B\setminus\{a,c\}|c$ by Central Monotonicity axiom. Then, by Lemma~\ref{confusing lemma strict}, there is a path from vertex $a$ to vertex $c$ that does not contain internal vertices from the set $B\setminus\{a,c\}$. Therefore, by Lemma~\ref{about simple paths}, there must exist a path from vertex $a$ to vertex $c$ that does not contain internal vertices from the set $B$. 
\end{proof}

\begin{lemma}\label{right left strict}
If $(V,E)\vDash A|B|C$, then $X\vdash A|B|C$.
\end{lemma}
\begin{proof} We consider the following four cases:

\noindent{\em Case I:} the set $A$ is empty. Then $X\vdash A|B|C$ by Empty Set axiom. 

\noindent{\em Case II:} the set $C$ is empty. Then $X\vdash C|B|A$ by Empty Set axiom. Thus, $X\vdash A|B|C$ by Symmetry axiom.  

\noindent{\em Case III:} Sets $A$ and $C$ are not empty and $X\vdash a|B|c$ for each $a\in A$ and each $c\in C$. Thus, $X\vdash A|B|c$ for each $c\in C$, by multiple applications of Aggregation axiom, due to the set $A$ not being empty. Hence, $X\vdash c|B|A$ for each $c\in C$ by Symmetry axiom. Then, $X\vdash C|B|A$ by multiple applications of Aggregation axiom, due to the set $C$ not being empty. Therefore, $X\vdash A|B|C$ by Symmetry axiom.

\noindent{\em Case IV:} There are $a\in A$ and $c\in C$ such that $X\nvdash a|B|c$. Thus, by Lemma~\ref{post-confusing lemma}, there exists a path from vertex $a$ to vertex $c$ that does not contain vertices from the set $B$. Therefore, $(V,E)\nvDash A|B|C$ by Definition~\ref{sat-strict}.
\end{proof}

\begin{lemma}\label{main induction}
$X\vdash \psi$ iff $(V,E)\vDash\psi$, for each $\psi\in\Phi(V)$.
\end{lemma}
\begin{proof}
We prove the lemma by induction on the structural complexity of a formula $\psi$. The base case follows from Lemma~\ref{left right strict} and Lemma~\ref{right left strict}. The induction step follows from Definition~\ref{sat-strict} and the maximality and the consistency of the set $X$ in the standard way. 
\end{proof}
To finish the proof of the theorem, recall that $\neg\phi\in X$. Thus, $X\nvdash\phi$ due to the consistency of the set $X$. Therefore, $(V,E)\nvDash\phi$ by Lemma~\ref{main induction}. 
\end{proof}

\section{Conclusion}\label{conclusion section}

In this article we introduced a complete axiomatic system describing properties of betweenness relation $A|B|C$ in a finite graph defined as ``every path from a vertex in the set $A$ to a vertex in the set $C$ contains at least on internal vertex from the set $B$". One can also consider a non-strict betweenness relation in which the vertex from the the set $B$ is not required to be an {\em internal} vertex of the path.




Another natural question is that of axiomatization of betweenness relation between sets of points on a line, on a plane, or, more generally, in a topological space. In the introduction section of this article we claimed without proof that although insertion principle~(\ref{symmetric insertion stronger}) is valid in an arbitrary topological space, stronger principle~(\ref{symmetric insertion strongest}) does not hold in $\mathbb{R}^2$. In fact, insertion principle (\ref{symmetric insertion strongest}) does not hold even in $\mathbb{R}$. To see the later, it is enough to consider $A=B_2=\mathbb{Q}$, $C=B_1=\mathbb{R}\setminus\mathbb{Q}$, and $I=\varnothing$. Indeed, the statements
$A|B_1,I,B_2|C$,  $A|I,C|B_1$, and  $B_2|A,I|C$ are true in this setting because between any rational number and any irrational number there is a rational number and an irrational number. Statement $A|I|C$ is false because the set $I$ is empty. In case of subsets of $\mathbb{R}^2$, the same result could be achieved by choosing $A=B_2=\mathbb{Q}\times\mathbb{R}$, $C=B_1=(\mathbb{R}\setminus\mathbb{Q})\times\mathbb{R}$, and $I=\varnothing$. The complete axiomatization of all properties of betweenness common to all topological spaces remains an open question.

\bibliographystyle{unsrt}
\bibliography{sp}

\end{document}